\documentclass[12pt,a4paper,reqno]{amsart}

\usepackage{amsmath,amssymb,amsthm}
\usepackage[margin=1in, footskip=30pt]{geometry}

\makeatletter
\renewcommand{\section}{\@startsection{section}{1}%
  {0pt}{-3.5ex plus -1ex minus -.2ex}{2.3ex plus .2ex}%
  {\normalfont\large\bfseries}}
\renewcommand{\subsection}{\@startsection{subsection}{2}%
  {0pt}{-3.25ex plus -1ex minus -.2ex}{1.5ex plus .2ex}%
  {\normalfont\normalsize\bfseries}}
\renewcommand{\@secnumfont}{\bfseries}
\makeatother

\makeatletter
\def\@settitle{}
\def\@setauthors{}
\def\@setdate{}
\def\ps@firstpage{\ps@plain}%
\def\@setcopyright{}%
\renewcommand{\copyrightinfo}[2]{}%
\AtBeginDocument{\def\@setcopyright{}\def\@serieslogo{}}%

\makeatother

\usepackage[hidelinks,
  pdftitle={Quartic reductions and elliptic obstructions for perfect Euler bricks},
  pdfauthor={René Peschmann},
  pdfsubject={Number Theory, Algebraic Geometry},
  pdfkeywords={perfect Euler brick, perfect cuboid, genus-3 curve, elliptic curve, Kummer character, 2-descent, Pythagorean triple},
  pdflang={en},
]{hyperref}
\usepackage{enumitem}

\newtheorem{theorem}{Theorem}[section]
\newtheorem{lemma}[theorem]{Lemma}
\newtheorem{proposition}[theorem]{Proposition}
\newtheorem{corollary}[theorem]{Corollary}
\theoremstyle{definition}

\newtheorem{remark}[theorem]{Remark}

\makeatletter
\def\thm@space@setup{%
  \thm@preskip=10pt plus 3pt minus 2pt
  \thm@postskip=10pt plus 3pt minus 2pt
}
\makeatother
\newcommand{\QQ}{\mathbb{Q}}
\newcommand{\ZZ}{\mathbb{Z}}
\newcommand{\Zi}{\ZZ[i]}
\newcommand{\rk}{\mathrm{rk}}

\newcommand{\divv}{\mathrm{div}}

\title[Quartic reductions and elliptic obstructions for perfect Euler bricks]{%
  Quartic reductions and elliptic obstructions for perfect Euler bricks}

\author{Ren\'e Peschmann}
\date{\small April 7, 2026}

\begin{document}

\maketitle

\begin{center}
  {\large\bfseries Quartic reductions and elliptic obstructions\\for perfect Euler bricks}
  \par\vskip 12pt
  {Ren\'e Peschmann}
  \par\vskip 4pt
  {\small April 7, 2026}
\end{center}
\vskip 12pt

\begin{abstract}
We show that the perfect Euler brick (perfect cuboid) problem is
equivalent to the following elementary question: do there exist
coprime integers $a, b, m, n$ such that the two expressions
\[
  \bigl(2(a^2{-}b^2)mn\bigr)^2 + \bigl((a^2{+}b^2)(m^2{-}n^2)\bigr)^2
  \quad\text{and}\quad
  (4abmn)^2 + \bigl((a^2{+}b^2)(m^2{-}n^2)\bigr)^2
\]
are simultaneously perfect squares?
Despite their near-identical structure (differing only in the
first summand), no solution has ever been found.
We reduce this quartic pair to a one-parameter family of genus-3
hyperelliptic curves $C_A\colon w^2 = \lambda^8 + A\lambda^4 + 1$ and
develop obstructions on the distinguished elliptic quotient~$E_A$:
the Kummer character $\chi_f$ is non-trivial on the $4$-torsion,
and $2$-descent arguments exclude several families of square classes.
Computationally, we verify that no solution exists for
parameters up to~$10^3$.
These results do not yet exclude perfect Euler bricks
unconditionally; the remaining gap and possible approaches
(including a genus-5 covering obstruction and connections
to $\QQ(\sqrt{2})$) are discussed.
\end{abstract}

\section{Introduction}\label{sec:intro}

A \emph{perfect Euler brick} (also called \emph{perfect cuboid}) is a
rectangular parallelepiped with positive integer edges $e_1, e_2, e_3$
such that all three face diagonals
$d_{ij} = \sqrt{e_i^2 + e_j^2}$ and the space diagonal
$D = \sqrt{e_1^2 + e_2^2 + e_3^2}$ are also positive integers:
\begin{equation}\label{eq:euler-brick}
  e_i^2 + e_j^2 = d_{ij}^2 \quad (1 \le i < j \le 3), \qquad
  e_1^2 + e_2^2 + e_3^2 = D^2.
\end{equation}
While Euler bricks (satisfying only the face-diagonal conditions)
exist (the smallest has edges $(44, 117, 240)$), but no perfect Euler
brick has ever been found, and the question has remained open since
at least the 18th century; see Guy~\cite{euler-brick-survey} and
Sharipov~\cite{sharipov} for the related cuboid polynomial approach.

\medskip

This paper does not claim a proof of non-existence of perfect Euler
bricks.  Rather, it develops a new reduction framework and derives
several unconditional obstructions.  The novelty lies in packaging
the quartic pair into a genus-3 curve~$C_A$, reinterpreting the
square conditions through a rational function~$f$ on the elliptic
quotient~$E_A$, and analysing the resulting Kummer character~$\chi_f$
on the torsion subgroup.
The key bridge (Corollary~\ref{cor:f-square}) is that non-degenerate
rational points on~$C_A$ correspond precisely to points
$P \in E_A(\QQ)$ with $f(P)$ a nonzero rational square;
the problem thus becomes a square-value problem on an elliptic curve.

\newpage

Our main contributions are:
\begin{enumerate}[label=(\roman*)]
  \item A reduction of the problem to a symmetric quartic
    pair~\eqref{eq:quartic-pair}, and a forward implication showing
    that any brick produces a non-degenerate rational point on a
    genus-3 curve~$C_A$ (Sections~\ref{sec:setup}--\ref{sec:genus3}).
  \item A computation of the Kummer character $\chi_f$ on the
    $2$-torsion of~$E_A$ and a proof of non-triviality on the
    $4$-torsion (Section~\ref{sec:kummer}).
  \item Algebraic arguments via $2$-descent that exclude
    several families of square classes, together with computational
    verification for all parameters up to~$10^3$
    (Sections~\mbox{\ref{sec:algebraic}--\ref{sec:computation}}).
\end{enumerate}

\noindent
\textbf{Status of results.}
\emph{Unconditional:} any perfect Euler brick yields
a non-degenerate rational point on~$C_A$ (Lemma~\ref{lem:forward});
the character $\chi_f$ satisfies $\chi_f(T_4) \ne 1$ for every
$4$-torsion point (Theorem~\ref{thm:chi-torsion}); and for points
with $\delta_3 = 1$, the value $f(P)$ is not a square
(Theorem~\ref{thm:d3-cases}).
\emph{Computational:} for all parameters up to~$10^3$,
the product $f_1 f_2$ is never a perfect square
(Section~\ref{sec:computation}).
\emph{Open:} these obstructions do not yet cover all
descent classes; see Section~\ref{sec:discussion} for a detailed
assessment.

\medskip\noindent
\textbf{Outline.}
Section~\ref{sec:setup} parametrises the brick conditions down to a
quartic pair.
Section~\ref{sec:descent} packages the pair into a genus-3 curve.
Section~\ref{sec:genus3} analyses the curve via its elliptic quotients.
Section~\ref{sec:kummer} studies the Kummer character~$\chi_f$.
Sections~\ref{sec:algebraic}--\ref{sec:computation} provide algebraic
and computational obstructions.
Section~\ref{sec:discussion} discusses the remaining gap and further
directions, including a genus-5 covering obstruction.

\medskip\noindent
\textbf{Notation.}
Throughout, $s = a/b$ is the ratio of the first Euclid pair
$(a,b)$ with $a > b > 0$, $\gcd(a,b) = 1$, $a - b$ odd.
Since $c(s) = c(1/s)$, one may equivalently work with
$s' = b/a = 1/s < 1$ without changing the family.
Further, $\lambda$ is the conic parameter from~\S\ref{ssec:conic},
$c = c(s)$ is the normalised quartic coefficient~\eqref{eq:c-def},
and $A = 2 - 4c^2$ is the family parameter of~$C_A$.
The $2$-torsion points of $E_A\colon y^2 = (x+A)(x-2)(x+2)$ are
$T_1 = (-A,0)$, $T_2 = (2,0)$, $T_3 = (-2,0)$.

\section{From Euler bricks to quartic pairs}\label{sec:setup}

\subsection{The Pythagorean framework}\label{ssec:pyth}

By Euclid's formula, every primitive Pythagorean triple
(odd leg, even leg, hypotenuse) can be written as
$(m^2 - n^2,\; 2mn,\; m^2 + n^2)$
for coprime integers $m > n > 0$ with $m - n$ odd.
We write $U = m^2 - n^2$, $V = 2mn$, $W = m^2 + n^2$ for these
three components, so that $W^2 = U^2 + V^2$.

Choose a Euclid pair $(a,b)$ with $a > b > 0$, $\gcd(a,b) = 1$,
$a - b$ odd, and set
\[
  U_1 = a^2 - b^2, \quad V_1 = 2ab, \quad W_1 = a^2 + b^2.
\]
A second pair $(m,n)$ gives
$U_2 = m^2 - n^2$, $V_2 = 2mn$, $W_2 = m^2 + n^2$.
Equating the common edge $e_1$ from both triples
($e_1 = k_1 U_1 = k_2 U_2$) determines $k_2 = k_1 U_1/U_2$,
and setting $k_1 = U_2$ clears denominators
(cf.\ Spohn~\cite{spohn}).
The edges become
\begin{equation}\label{eq:edges}
  e_1 = U_1 U_2, \qquad e_2 = V_1 U_2, \qquad e_3 = U_1 V_2,
\end{equation}
up to a common positive rational factor, with automatic face diagonals
$d_{12} = W_1 U_2$ and $d_{13} = U_1 W_2$.
The remaining conditions are:
\begin{enumerate}[label=(\alph*)]
  \item \textbf{Third face diagonal:}
  \begin{equation}\label{eq:master}
    V_1^2\, U_2^2 + U_1^2\, V_2^2 = \square
    \qquad \text{(the \emph{Master condition})}.
  \end{equation}
  \item \textbf{Space diagonal:}
  \begin{equation}\label{eq:htotal}
    W_1^2\, U_2^2 + U_1^2\, V_2^2 = \square
    \qquad \text{(the \emph{$H$-total condition})}.
  \end{equation}
\end{enumerate}

\begin{remark}\label{rem:recursive}
Since $W_1^2 = U_1^2 + V_1^2$, the difference
$(W_1^2 - V_1^2)U_2^2 = (U_1 U_2)^2$ is always a perfect square.
In terms of the Pythagorean ratios $R_i = V_i/U_i$, the Master
condition reads $R_1^2 + R_2^2 = R^2$ for a further ratio~$R$:
a Pythagorean triple of Pythagorean ratios.
\end{remark}

\subsection{Conic reduction}\label{ssec:conic}

Setting $\rho = t - 1/t$ (where $t = m/n$), the two conditions and
the rationality of~$t$ become three simultaneous square conditions
on~$\rho$:
\begin{alignat}{2}
  Y^2 &= W_1^2\, \rho^2 + 4U_1^2 &\qquad& \text{($H$-total)},
    \label{eq:Y-cond} \\
  X^2 &= V_1^2\, \rho^2 + 4U_1^2 &\qquad& \text{(Master)},
    \label{eq:X-cond} \\
  S^2 &= \rho^2 + 4 &\qquad& \text{(rationality of $t$)}.
    \label{eq:S-cond}
\end{alignat}
Condition~\eqref{eq:Y-cond} is a conic with rational point
$(\rho,Y) = (0, 2U_1)$.  Parametrising via
\begin{equation}\label{eq:lambda-param}
  \rho = \frac{4U_1\lambda}{W_1(1 - \lambda^2)}, \qquad
  Y = \frac{2U_1(1 + \lambda^2)}{1 - \lambda^2}
\end{equation}
and substituting into~\eqref{eq:S-cond}--\eqref{eq:X-cond} gives:

\begin{proposition}\label{prop:equiv}
A perfect Euler brick exists if and only if there exist coprime
positive integers $a > b$ with $a - b$ odd, and a rational
$\lambda \notin \{0, \pm 1\}$, such that both
\begin{align}
  f_1 &= W_1^2(\lambda^2 - 1)^2 + 4U_1^2\lambda^2, \label{eq:f1} \\
  f_2 &= W_1^2(\lambda^2 - 1)^2 + 4V_1^2\lambda^2 \label{eq:f2}
\end{align}
are perfect squares.
\end{proposition}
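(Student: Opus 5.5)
The plan is to prove both directions by clearing denominators in the chain of substitutions of \S\ref{ssec:pyth}--\S\ref{ssec:conic}. The conditions \eqref{eq:master} and \eqref{eq:htotal} are homogeneous in $(m,n)$, so they depend only on the ratio $t=m/n\in\QQ^\times$. I will therefore never insist that $m,n$ be a primitive Euclid pair; only $(a,b)$ is kept primitive.

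\textbf{Forward direction.} Start from a perfect Euler brick and divide by the gcd of the edges.
\begin{itemize}
\item[(i)] A primitive Euler brick has exactly one odd edge. Each face triple $e_i^2+e_j^2=d_{ij}^2$ is primitive up to a factor $g$, and a primitive triple has exactly one even leg. Two odd edges would therefore force an odd face triple with two odd legs, which is impossible mod $4$. If all edges were even the brick would not be primitive. I will cite this as a standard parity fact rather than prove it in detail.
\item[(ii)] Call the odd edge $e_1$. In the face $(e_1,e_2,d_{12})=g(\ldots)$ the factor $g$ is odd, so $e_1$ corresponds to the odd leg $a^2-b^2=U_1$ of a Euclid pair $(a,b)$. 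Likewise $e_1$ is an odd multiple of $U_2=m^2-n^2$ in the face $(e_1,e_3,d_{13})$.
\item[(iii)] The brick is then proportional to \eqref{eq:edges}, so \eqref{eq:master} and \eqref{eq:htotal} hold. Dividing both by $n^4$ and then by $t^2$ turns them into \eqref{eq:X-cond} and \eqref{eq:Y-cond} with $\rho=t-1/t$. Condition \eqref{eq:S-cond} holds with $S=t+1/t$.
\item[(iv)] The point $(\rho,Y)$ lies on the conic \eqref{eq:Y-cond}. Since $\rho\neq 0$, it differs from $(0,2U_1)$. The line through these two points has a rational slope, which gives $\lambda\in\QQ$ with \eqref{eq:lambda-param}. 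Here $\lambda\neq 0$ because $\rho\ne0$. The values $\lambda=\pm1$ correspond to the points at infinity of the conic, so they cannot occur for a finite $\rho$.
\end{itemize}

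\textbf{Key computation.} Substituting \eqref{eq:lambda-param} gives
\[
\rho^2+4=\frac{4f_1}{W_1^2(1-\lambda^2)^2},\qquad
V_1^2\rho^2+4U_1^2=\frac{4U_1^2 f_2}{W_1^2(1-\lambda^2)^2}.
\]
Hence \eqref{eq:S-cond} holds iff $f_1$ is a rational square, and \eqref{eq:X-cond} holds iff $f_2$ is. For integer $\lambda$ these are integer squares. In general, "perfect square" should be read after clearing the denominator of $\lambda$, which is harmless because $f_1,f_2$ are homogeneous of degree $4$ in $(\lambda{:}1)$.

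\textbf{Converse.} Given $(a,b)$ and $\lambda\notin\{0,\pm1\}$ with $f_1=\square$ and $f_2=\square$:
\begin{itemize}
\item[(i)] Define $\rho$ by \eqref{eq:lambda-param}. Then $\rho\in\QQ^\times$, and the displayed identities give $S,X\in\QQ$, while $Y$ is given directly by \eqref{eq:lambda-param}.
\item[(ii)] Set $t=(\rho+S)/2$. Then $t\cdot\bigl((S-\rho)/2\bigr)=1$, so $t\in\QQ^\times$ and $t-1/t=\rho$.
\item[(iii)] Write $t=m/n$ and form the edges \eqref{eq:edges}, taking absolute values, which is harmless since only squares enter. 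Multiply by a common denominator.
\end{itemize}
The edges are nonzero: $U_1\ne0$ because $a>b$, $V_1\neq0$, $U_2\neq0$ because $\rho\neq0$ forces $t\neq\pm1$, and $V_2\ne0$ because $t\ne0$. The face diagonals $d_{12},d_{13}$ are automatic. Reversing the division by $n^4t^2$ recovers \eqref{eq:master} and \eqref{eq:htotal} from $X$ and $Y$, so this is a perfect Euler brick.

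\textbf{Main obstacle.} The algebra is routine; the delicate part is the forward direction's claim that the common edge can be taken to be the \emph{odd} leg $U$ in both Euclid parametrisations simultaneously. This is where the parity argument for primitive bricks is needed. If that argument turned out to be awkward, my fallback is to allow $e_1$ to be either leg, and then use that swapping $(a,b)\mapsto(a+b,a-b)$, up to the factor $2$, exchanges the roles of $U_1$ and $V_1$ after rescaling.
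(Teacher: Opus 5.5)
Your proof is correct and follows the same route as the paper. Forward: brick $\to$ Euclid pairs $\to$ $\rho=t-1/t$ $\to$ $\lambda$ via the conic, with $f_1=\square$ and $f_2=\square$ read off from the two substitution identities; converse: $\lambda\to\rho\to t=(\rho+S)/2$, with Master and $H$-total recovered. Your parity argument fills a step the paper's proof takes for granted, namely that every brick is proportional to $(U_1U_2,\,V_1U_2,\,U_1V_2)$ with $(a,b)$ coprime of opposite parity: a primitive brick has exactly one odd edge, which serves as the odd leg $U_1$, and $(m,n)$ need not be primitive since everything depends only on $t$.
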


\begin{proof}
Given a brick, the two Euclid pairs determine $\rho$ and hence
$\lambda$; the Master condition becomes $f_2 = \square$ and the
rationality of~$t$ becomes $f_1 = \square$.
Conversely, $f_1 = r^2$ and $f_2 = x^2$ recover~$\rho$
via~\eqref{eq:lambda-param}; then
$t = (\rho + \sqrt{\rho^2+4})/2$ is rational since
$f_1 = \square$ ensures $\rho^2 + 4 = \square$; the Master
condition holds by $f_2 = \square$; and $H$-total is automatic from
the conic parametrisation.
\end{proof}

\subsection{The normalised quartic family}\label{ssec:normalised}

Setting $s = a/b$ and dividing by $W_1^2$:
\begin{equation}\label{eq:quartic-pair}
  r^2 = \lambda^4 + 2c\,\lambda^2 + 1, \qquad
  x^2 = \lambda^4 - 2c\,\lambda^2 + 1,
\end{equation}
where
\begin{equation}\label{eq:c-def}
  c = c(s) = \frac{s^4 - 6s^2 + 1}{(1+s^2)^2}
  = \frac{U_1^2 - V_1^2}{W_1^2}.
\end{equation}
The product is $f_1 f_2 = \lambda^8 + A\lambda^4 + 1$
with $A = 2 - 4c^2$.

\begin{remark}[Three sums of squares]\label{rem:three-sums}
With $\tilde L_1 = 2U_1\lambda$, $\tilde L_2 = 2V_1\lambda$,
$\tilde L_3 = W_1(\lambda^2-1)$:
\[
  f_1 = \tilde L_1^2 + \tilde L_3^2, \qquad
  f_2 = \tilde L_2^2 + \tilde L_3^2, \qquad
  \tilde L_1^2 + \tilde L_2^2 = (2W_1\lambda)^2.
\]
The third sum is automatic.  By the Brahmagupta--Fibonacci identity,
the product decomposes as
\[
  f_1 f_2
  = (\tilde L_1\tilde L_2 - \tilde L_3^2)^2
    + \tilde L_3^2\,(\tilde L_1 + \tilde L_2)^2,
\]
a sum of two squares whose second summand is itself a perfect square.
In $\Zi$, each $f_i$ is a norm:
$f_i = N(\tilde L_i + i\tilde L_3)$.
Being a perfect square requires that for every split prime
$p = \pi\bar\pi$ ($p \equiv 1 \pmod 4$), the sum
$v_\pi(z_i) + v_{\bar\pi}(z_i)$ is even, where
$z_i = \tilde L_i + i\tilde L_3$.
(Primes $p \equiv 3 \pmod 4$ automatically divide any norm to an
even power.)
\end{remark}

\begin{remark}[Unit-circle structure]\label{rem:unit-circle}
Define $\kappa = 4ab(a^2-b^2)/(a^2+b^2)^2$.
Then $c^2 + \kappa^2 = 1$, and
the normalised quartics decompose as
\begin{equation}\label{eq:unit-circle-decomp}
  \frac{f_1}{W_1^2 n^4}
  = (\lambda^2 + c)^2 + \kappa^2, \qquad
  \frac{f_2}{W_1^2 n^4}
  = (\lambda^2 - c)^2 + \kappa^2,
\end{equation}
differing only in the sign of~$c$.
The summand~$\kappa^2$ is the entire obstruction:
for $\kappa = 0$ (i.e.\ $a = b$, degenerate)
each $f_i$ reduces to a perfect square.
Setting $\varphi = 4s/(1+s^2)$, $\psi = 2(s^2-1)/(1+s^2)$,
so that $\varphi^2 + \psi^2 = 4$,
the product factorises over $\QQ(s)[\lambda]$ into four
positive-definite quadratics:
\begin{equation}\label{eq:four-factors}
  \frac{f_1 f_2}{W_1^4 n^8}
  = \prod_{\pm}
  (\lambda^2 \pm \varphi\lambda + 1)\,
  (\lambda^2 \pm \psi\lambda + 1).
\end{equation}
The three involutions
$\lambda\mapsto -\lambda$,
$\lambda\mapsto 1/\lambda$,
$\lambda\mapsto -1/\lambda$
pair these factors as
$f_1 = P_1 P_2$, $f_2 = P_3 P_4$
and two further pairings, recovering the
three elliptic quotients $E_A$, $E'_A$, $E''_A$
of \S\ref{ssec:quotients}.
\end{remark}

\section{From quartic pairs to the genus-3 curve}\label{sec:descent}

\begin{lemma}\label{lem:forward}
If a perfect Euler brick exists, then for the corresponding~$s$,
the genus-3 hyperelliptic curve
$C_A\colon w^2 = \lambda^8 + A(s)\,\lambda^4 + 1$
has a non-degenerate rational point.
\end{lemma}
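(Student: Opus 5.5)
The plan is to go through Proposition~\ref{prop:equiv} and the normalised pair~\eqref{eq:quartic-pair}, and then simply multiply. Start from a perfect Euler brick. After scaling to a primitive one, fix an edge $e_1$ that is odd; this makes the two face-diagonal triples through $e_1$ have $e_1$ as their odd leg. Writing these triples via Euclid pairs $(a,b)$ and $(m,n)$ puts the brick in the form~\eqref{eq:edges} up to a rational factor. The Master condition~\eqref{eq:master} and the $H$-total condition~\eqref{eq:htotal} then hold. Set $t=m/n$ and $\rho=t-1/t$; then~\eqref{eq:Y-cond}--\eqref{eq:S-cond} are satisfied with rational $X,Y,S$.

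Next, invert the conic parametrisation~\eqref{eq:lambda-param}. The point $(\rho,Y)$ on the conic~\eqref{eq:Y-cond} gives $\lambda=(Y-2U_1)/(W_1\rho)$ by the usual line through $(0,2U_1)$. By Proposition~\ref{prop:equiv}, this rational $\lambda$ makes $f_1=r_0^2$ and $f_2=x_0^2$ perfect squares. Dividing by $W_1^4$ gives the normalised form~\eqref{eq:quartic-pair}: $r^2=\lambda^4+2c\lambda^2+1$ and $x^2=\lambda^4-2c\lambda^2+1$ with $r,x\in\QQ$. Multiplying, we get $(rx)^2=(\lambda^4+1)^2-4c^2\lambda^4=\lambda^8+(2-4c^2)\lambda^4+1$. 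So $(\lambda,w)=(\lambda,rx)$ is a rational point of $C_A$ with $A=A(s)=2-4c(s)^2$.

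It remains to check non-degeneracy. I read this as $\lambda\notin\{0,\pm1,\infty\}$ and $w\neq0$, together with $A\neq\pm2$ so that $C_A$ really has genus~$3$. For $\lambda$, the brick edges are positive, so $V_2=2mn\neq0$ and $U_2\neq0$. Hence $t\neq\pm1$, so $\rho\neq0$, and $\rho$ is finite. From~\eqref{eq:lambda-param}, $\lambda=0$ forces $\rho=0$, and $\lambda=\pm1$ forces $\rho=\infty$; both are excluded. For $w$, the decomposition~\eqref{eq:unit-circle-decomp} writes each $f_i/W_1^2$ as a square plus $\kappa^2$. Since $\kappa=4ab(a^2-b^2)/W_1^2\neq0$ (because $a>b>0$), both factors are positive, so $w\neq0$. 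For $A$: $A=2$ would need $c=0$, i.e.\ $U_1=V_1$, which cannot happen for a primitive triple of one odd and one even leg. $A=-2$ would need $c^2=1$, i.e.\ $\kappa=0$, which is excluded as above. Thus the octic $\lambda^8+A\lambda^4+1$ is squarefree and $C_A$ has genus~$3$.

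I expect the main obstacle to be the bookkeeping in the first step rather than any hard mathematics. One has to justify that an arbitrary brick can be brought into the shape~\eqref{eq:edges} with a common odd leg; in a primitive brick exactly one edge is odd. One also has to check that the inverse parametrisation produces $\lambda$ exactly in the admissible range. If the paper's notion of ``non-degenerate'' is stronger (for instance excluding the points fixed by the involutions $\lambda\mapsto\pm1/\lambda$), I would verify those extra exclusions using $\rho\neq0,\infty$ in the same way.
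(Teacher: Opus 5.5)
Your proof is correct and is essentially the paper's argument: use Proposition~\ref{prop:equiv} to get a rational $\lambda\notin\{0,\pm1\}$ with $f_1,f_2$ squares, normalise, and multiply to obtain the point $(\lambda,rx)$ on $C_A$. Your extra checks ($\rho\neq0,\infty$ via the inverse conic map, $w\neq0$, $A\neq\pm2$) only make explicit what the paper leaves to Proposition~\ref{prop:equiv} or leaves unstated. One cosmetic slip: to reach \eqref{eq:quartic-pair} you divide each $f_i$ by $W_1^2$, so $r=r_0/W_1$ and $x=x_0/W_1$; it is the product $f_1f_2$ that is divided by $W_1^4$.
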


\begin{proof}
By Proposition~\ref{prop:equiv}, a brick gives $f_1 = r^2$ and
$f_2 = x^2$ for some $\lambda \notin \{0,\pm 1\}$.
Then $f_1 f_2 = (rx)^2 = \lambda^8 + A\lambda^4 + 1$,
so $(\lambda, rx)$ is a rational point on~$C_A$ with
$\lambda \ne 0, \pm 1$.
\end{proof}

This forward implication is the only direction needed for
non-existence: if $C_A$ has no non-degenerate rational points,
no brick can arise from the parameter~$s$.

\begin{remark}[Function field descent]\label{rem:descent}
The converse, that every non-degenerate point on~$C_A$ yields a
brick, would require $f_1 f_2 \in \QQ^{*2}$ to imply
$f_1 \in \QQ^{*2}$.  Over the function field $\QQ(s)(\lambda)$,
this holds because $f_1$ and $f_2$ are coprime in the UFD
$\QQ(s)[\lambda]$
($\gcd(f_1,f_2) \mid \gcd(4c\lambda^2, 2(\lambda^4+1)) = 1$
for $c \ne 0$).
At specific rational values, primes dividing $c(s_0)$ can
introduce common factors, so the specialised converse requires
additional verification.  We do not use this converse.
\end{remark}

\section{The genus-3 curve and its elliptic quotients}\label{sec:genus3}

The curve $C_A\colon w^2 = \lambda^8 + A\lambda^4 + 1$ has genus~$3$
(degree-$8$ polynomial with no odd-degree terms).
Its \emph{degenerate} rational points are
$(\lambda,w) \in \{(0, \pm 1), (\pm 1, w_0)\}$.

\subsection{Quotients and Jacobian decomposition}\label{ssec:quotients}

The curve carries a Klein four-group of involutions:
\[
  \iota_1\colon \lambda \mapsto -\lambda, \qquad
  \iota_2\colon \lambda \mapsto 1/\lambda, \qquad
  \iota_3 = \iota_1\iota_2\colon \lambda \mapsto -1/\lambda.
\]

\medskip\noindent
\textbf{Quotient by $\iota_1$.}
Setting $\mu = \lambda^2$ gives the genus-1 quartic
\begin{equation}\label{eq:quartic-EA}
  \mathcal{E}\colon w^2 = \mu^4 + A\mu^2 + 1.
\end{equation}
Using the rational point $(\mu,w) = (0,1)$, this transforms to the
Weierstrass model
\begin{equation}\label{eq:EA}
  E_A\colon y^2 = (x+A)(x-2)(x+2).
\end{equation}
The back-transformation from Weierstrass coordinates to the quartic
model is
\begin{equation}\label{eq:back-trafo}
  \mu^2 = \frac{4(x+A)}{x^2 - 4},
\end{equation}
valid for $x \ne \pm 2$.

\medskip\noindent
\textbf{Quotient by $\iota_2$.}
Since $\iota_2(\lambda) = 1/\lambda$ sends $w^2 = \lambda^8 + A\lambda^4 + 1$ to
$(w/\lambda^4)^2 = \lambda^{-8}(\lambda^8+A\lambda^4+1)$, the correct invariants are
$\eta = \lambda + 1/\lambda$ and $u = w/\lambda^2$.  Dividing $w^2 = \lambda^8+A\lambda^4+1$
by~$\lambda^4$ gives $u^2 = \lambda^4 + A + \lambda^{-4}$, and since
$\lambda^2 + \lambda^{-2} = \eta^2 - 2$:
\begin{equation}\label{eq:Eprime}
  E'_A\colon u^2 = \eta^4 - 4\eta^2 + (A+2).
\end{equation}

\medskip\noindent
\textbf{Quotient by $\iota_3$.}
With invariants $\sigma = \lambda - 1/\lambda$ and $v = w/\lambda^2$, and using
$\lambda^2 + \lambda^{-2} = \sigma^2 + 2$:
\begin{equation}\label{eq:Edprime}
  E''_A\colon v^2 = \sigma^4 + 4\sigma^2 + (A+2).
\end{equation}

\begin{proposition}\label{prop:jacobian}
The Jacobian of $C_A$ decomposes (up to isogeny) as
\[
  J(C_A) \sim E_A \times E'_A \times E''_A,
\]
a product of three elliptic curves ($\dim J = g(C_A) = 3$).
\end{proposition}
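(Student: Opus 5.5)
The plan is the standard Kani--Rosen argument for a curve with an action of $G=\{1,\iota_1,\iota_2,\iota_3\}\cong(\ZZ/2)^2$. First I check that the three maps really are automorphisms of $C_A$. The map $\iota_1$ acts as $(\lambda,w)\mapsto(-\lambda,w)$. The map $\iota_2$ acts as $(\lambda,w)\mapsto(1/\lambda,w/\lambda^4)$, which preserves $w^2=\lambda^8+A\lambda^4+1$ because this polynomial is palindromic. Then $\iota_3=\iota_1\iota_2$. These three commute and are distinct involutions, so $G$ is a Klein four-group. Since the genus-$3$ curve $C_A$ is hyperelliptic, its hyperelliptic involution $(\lambda,w)\mapsto(\lambda,-w)$ is central. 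I will have to choose the lifts of $\iota_2$ and $\iota_3$ (sign of $w$) so that they are exactly the ones producing the quotients $E'_A$ and $E''_A$ written above. The invariants $u=w/\lambda^2$ and $v=w/\lambda^2$ fix this choice: with $w\mapsto w/\lambda^4$, the quantity $w/\lambda^2$ is indeed invariant.

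Next I compute the quotient genera. The quotient $C_A/\iota_1$ is the quartic $\mathcal E$, of genus $1$ when $A\neq\pm2$, since then $\mu^4+A\mu^2+1$ has distinct roots. Likewise $C_A/\iota_2\cong E'_A$ and $C_A/\iota_3\cong E''_A$ are genus $1$, by the computations (\ref{eq:Eprime}) and (\ref{eq:Edprime}). Their discriminant conditions reduce again to $A\neq\pm2$ (for $E'_A$ the roots in $\eta^2$ satisfy $16-4(A+2)\ne0$ and $A+2\ne0$). In the Euler brick setting $A=2-4c^2$ with $c\neq0$ and $c^2=1-\kappa^2\neq1$, so both exclusions hold. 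For $C_A/G$, I use $\eta^2=\lambda^2+\lambda^{-2}+2$ to see that it is a quotient of $\mathcal E$ by $\mu\mapsto1/\mu$. That quotient is a conic in $\mu+1/\mu$, so it has genus $0$. As a cross-check, Riemann--Hurwitz for $\iota_1$: the fixed points lie over $\lambda=0,\infty$, giving four points on $C_A$, and $2\cdot3-2=2(2\cdot1-2)+4$ holds.

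Finally I apply the Kani--Rosen theorem. For $G=\{1,\sigma_1,\sigma_2,\sigma_3\}$ a Klein four-group acting on $C$, it gives
\[
J(C)\times J(C/G)^2\sim J(C/\sigma_1)\times J(C/\sigma_2)\times J(C/\sigma_3).
\]
Here $J(C/G)=0$, so $J(C_A)\sim E_A\times E'_A\times E''_A$. The dimensions agree, $3=1+1+1$. To be self-contained I would instead sketch the underlying idempotent relation $2\cdot1+2e_G=e_1+e_2+e_3$ in $\QQ[G]$, where $e_H$ denotes averaging over $H$. Acting on $H^0(C_A,\Omega^1)$, which has basis $\lambda^k\,d\lambda/w$ for $k=0,1,2$, the three nontrivial characters of $G$ each occur once. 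I expect them to correspond to $\lambda\,d\lambda/w$ for $\iota_1$, and to $(1\mp\lambda^2)\,d\lambda/w$ for $\iota_2,\iota_3$. The pullbacks of the differentials of the three elliptic curves then span, and the pullback maps $E\to J(C_A)$ give an isogeny.

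The main obstacle is the bookkeeping of the $w$-sign in the lifts of $\iota_2$ and $\iota_3$. With the wrong lift, the quotient becomes the quadratic twist $u^2=-(\dots)$ or a genus-$0$ curve. The differential computation pins this down: I will verify directly that $(1-\lambda^2)\,d\lambda/w$ and $(1+\lambda^2)\,d\lambda/w$ are invariant under the chosen lifts, namely $d\eta/u$ and $d\sigma/v$ up to sign.
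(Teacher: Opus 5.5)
Your proposal is correct and follows the same Kani--Rosen route as the paper's sketch. It also supplies checks the paper omits: the lifts $w\mapsto w/\lambda^4$, the vanishing $g(C_A/G)=0$, the condition $A\neq\pm2$, and the eigen-differentials $\lambda\,d\lambda/w$ and $(1\mp\lambda^2)\,d\lambda/w$. There are two small slips. With $e_H$ the averaging idempotent, the relation is $1+2e_G=e_{\langle\iota_1\rangle}+e_{\langle\iota_2\rangle}+e_{\langle\iota_3\rangle}$, not $2\cdot1+2e_G=\dots$. And composing $\iota_k$ with the hyperelliptic involution gives a genus-$2$ quotient, not a quadratic twist or a conic.
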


\begin{proof}[Sketch]
The Klein four-group $\langle \iota_1, \iota_2 \rangle$ acts on
$J(C_A)$, decomposing it into eigenspaces for the three non-trivial
characters (cf.\ Kani--Rosen~\cite{kani-rosen}).
Each eigenspace is the image of the Prym variety of the
corresponding double cover $C_A \to C_A/\langle\iota_k\rangle$, which
is an elliptic curve since $g(C_A/\langle\iota_k\rangle) = 1$ for
each~$k$.  The three quotient maps
$C_A \to \mathcal{E}$, $C_A \to E'_A$, $C_A \to E''_A$
induce an isogeny $J(C_A) \to E_A \times E'_A \times E''_A$
with finite kernel.
\end{proof}

\subsection{The bridge: from $C_A$ to $E_A$}\label{ssec:bridge}

Let
\[
  \mathcal{E}\colon w^2 = \mu^4 + A\mu^2 + 1
\]
be the quartic quotient of~$C_A$.
Under the birational map $\mathcal{E} \dashrightarrow E_A$, a rational
point $P = (x,y) \in E_A(\QQ)$ with $x \ne \pm 2$ corresponds to a
rational point $(\mu,w) \in \mathcal{E}(\QQ)$, and
\[
  M(P) := \frac{4(x+A)}{x^2-4} = \mu^2.
\]
Thus a non-degenerate rational point on~$C_A$ is equivalent to a
rational point $(\mu,w) \in \mathcal{E}(\QQ)$ with
$\mu$ a nonzero rational square with $\mu \ne 1$
(excluding $\mu = 1$, which gives the degenerate points $\lambda = \pm 1$).
In Section~\ref{sec:kummer} we recast this square condition on~$\mu$
in terms of the function~$f$ on~$E_A$.

\newpage
\subsection{Generic rank}\label{ssec:generic-rank}

\begin{proposition}\label{prop:generic-rank}
Let $E$ denote either of the families $E_A$ or $E'_A$.
If there exists $s_0 \in \QQ$ such that the specialisation map
\[
  E(\QQ(s)) \longrightarrow E_{s_0}(\QQ)
\]
is injective and $\rk\, E_{s_0}(\QQ) = 0$, then
$\rk\, E(\QQ(s)) = 0$.
Here $E_{s_0}$ denotes the corresponding specialisation of~$E$
at~$s = s_0$.
\end{proposition}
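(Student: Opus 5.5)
The argument is a purely group-theoretic one: an injective homomorphism into a group of rank~$0$ forces the source to have rank~$0$. First I will fix the setting. $E$ is an elliptic curve over the function field $K=\QQ(s)$, given by the Weierstrass model \eqref{eq:EA}, or by a Weierstrass model of the quartic \eqref{eq:Eprime} obtained from its rational point $(\eta,u)=(0,\sqrt{A+2})$. I need to check that this point is rational: $A+2 = 4-4c^2 = 4\kappa^2$ by Remark~\ref{rem:unit-circle}, so $\sqrt{A+2}=2\kappa\in\QQ(s)$. I will also note that $E$ is non-constant, since $j(E_A)$ visibly depends on $A$, and that $A=2-4c(s)^2$ is a non-constant rational function of $s$. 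The Lang--N\'eron theorem then makes $E(\QQ(s))$ a finitely generated abelian group, so it has the form $\ZZ^{r}\oplus T$ with $T$ finite and $r=\rk\,E(\QQ(s))$.

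Next I will justify that specialisation is a group homomorphism. For all but finitely many $s_0\in\QQ$, the coefficients of the model have no pole at $s_0$ and the discriminant does not vanish there. For $E_A$ this requires $A(s_0)\neq\pm2$, i.e.\ $c(s_0)\neq0,\pm1$. At such $s_0$ the reduction map $E(\QQ(s))\to E_{s_0}(\QQ)$ sends a point with coordinates in $\QQ(s)$ to its value at $s_0$, using projective coordinates to clear poles. Because the addition law is given by polynomial formulas in the coefficients, valid on the good-reduction fibre, this map is a homomorphism. The hypothesis implicitly includes that $s_0$ is such a point, since otherwise the specialisation map is not defined. I will state this explicitly.

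The conclusion is then immediate. If $\phi\colon E(\QQ(s))\to E_{s_0}(\QQ)$ is an injective homomorphism, then $\ZZ^{r}\oplus T$ embeds in $E_{s_0}(\QQ)$. By Mordell--Weil, $\rk\,E_{s_0}(\QQ)=0$ means $E_{s_0}(\QQ)$ is finite. A subgroup of a finite group is finite, so $\ZZ^r$ is finite and $r=0$. Equivalently, $\phi$ maps a free part of rank $r$ injectively into a group with no elements of infinite order, which is impossible unless $r=0$.

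The only step needing care is the second, namely that specialisation is a well-defined homomorphism for the model chosen for $E'_A$. I expect this to be a bookkeeping issue rather than a real obstacle, because the transformation of the quartic to Weierstrass form is defined over $\QQ(s)$ and commutes with specialisation away from finitely many $s_0$. If one wants more than the statement, Silverman's specialisation theorem guarantees that injectivity holds for all but finitely many $s_0$. The statement, however, assumes injectivity, so the proof needs no such input.
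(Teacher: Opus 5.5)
Your proof is correct, but it takes a different route from the paper. The paper's entire proof is ``Immediate from Silverman's specialisation theorem.''

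You instead note that once injectivity is assumed, the statement is elementary. $E(\QQ(s))$ embeds in $E_{s_0}(\QQ)$, which is finite by Mordell--Weil and the rank-$0$ hypothesis. Hence $E(\QQ(s))$ is finite and has rank~$0$.

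This is the more accurate justification. Silverman's theorem says that specialisation of a non-constant family is injective at all but finitely many $s_0\in\QQ$, and the injectivity hypothesis makes exactly that input unnecessary. Its genuine role is in Remark~\ref{rem:generic-rank-evidence}, where it guarantees that the exceptional set of non-injective $s_0$ is finite. It gives no way, however, to certify that any particular one of the certified rank-$0$ values is injective, which is precisely the gap the paper concedes there.

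You could also trim your own argument. Injectivity merely as a map of sets into the finite set $E_{s_0}(\QQ)$ already forces $E(\QQ(s))$ to be finite. So Lang--N\'eron, the non-constancy check, and the homomorphism property of specialisation are not logically needed. What does matter is good reduction at $s_0$, so that $E_{s_0}$ is an elliptic curve and the map is defined, and you correctly treat that as part of the hypothesis.
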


\begin{proof}
Immediate from Silverman's specialisation theorem~\cite{silverman}.
\end{proof}

\begin{remark}[Computational evidence for generic rank~$0$]%
\label{rem:generic-rank-evidence}
Using the symmetry $c(s) = c(1/s)$, it suffices to test
the reciprocal representative $s' = 1/s < 1$, i.e.\
$s = a/b$ with $1 \le a < b \le 19$ and $\gcd(a,b) = 1$
($119$ pairs).
PARI/GP's~\cite{pari} \texttt{ellrank}
certifies $\rk\, E_{A(s_0)}(\QQ) = 0$ for $42$ values
and $\rk\, E'_{A(s_0)}(\QQ) = 0$ for $54$ values
(upper bound~$0$ from Simon $2$-descent;
the generic torsion $\ZZ/4\ZZ \times \ZZ/2\ZZ$
of~$E_A$ falls into the families studied
in~\cite{dujella-peral}).
By Proposition~\ref{prop:generic-rank}, the existence of a single
injective specialisation among these would imply generic rank~$0$.
The data provide strong evidence, but do not by themselves
constitute an unconditional proof: the (finite but unknown)
exceptional set of non-injective specialisations could in
principle contain all $42$ values.
A complete proof would require either certifying injectivity at
one specific~$s_0$, or an independent computation over $\QQ(s)$
(e.g.\ via Shioda--Tate on the associated elliptic surface).
\end{remark}

\newpage
\section{The Kummer character}\label{sec:kummer}

Define the rational function on~$E_A$:
\begin{equation}\label{eq:f-def}
  f(P) = \frac{2y(P)}{(x(P)-2)(x(P)+2)}.
\end{equation}

\begin{lemma}\label{lem:f-M}
$\displaystyle f(P)^2 = M(P) = \frac{4(x+A)}{x^2-4}$.
\end{lemma}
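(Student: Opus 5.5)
The identity is a direct computation from the definition~\eqref{eq:f-def} and the Weierstrass equation~\eqref{eq:EA}, so the plan is simply to square $f$ and substitute for $y^2$. First I will fix the domain: $f$ is regular and finite exactly at points $P=(x,y)\in E_A$ with $x\neq\pm 2$. These are the same points at which the back-transformation~\eqref{eq:back-trafo} and hence $M(P)$ are defined. The points $T_2,T_3$ and the point at infinity are excluded, or, if one prefers, the identity is read as an equality of rational functions on $E_A$.

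For such $P$, squaring~\eqref{eq:f-def} gives
\[
  f(P)^2=\frac{4y^2}{(x-2)^2(x+2)^2}.
\]
Next I will substitute $y^2=(x+A)(x-2)(x+2)$ from~\eqref{eq:EA}. This turns the numerator into $4(x+A)(x-2)(x+2)$. Since $x\neq\pm2$, one factor of $(x-2)(x+2)=x^2-4$ cancels against the denominator, leaving $4(x+A)/(x^2-4)=M(P)$, which is the claim.

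The only point needing any care, rather than a real obstacle, is that $M(P)$ is the quantity $\mu^2$ from~\eqref{eq:back-trafo}, so it is consistent with the birational map of \S\ref{ssec:bridge}. I take~\eqref{eq:back-trafo} as stated earlier in the paper. Optionally, one can also note that $f$ should equal $\pm\mu$ on the nose, with the sign fixed by the chosen birational map, but the lemma only requires the squared identity. As a by-product, the computation shows that $M(P)$ is automatically a rational square whenever $P\in E_A(\QQ)$ with $x\neq\pm2$.
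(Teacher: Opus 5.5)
Your proposal is correct and follows the paper's own proof: square the definition of $f$, substitute $y^2=(x+A)(x-2)(x+2)$ from the Weierstrass equation, and cancel one factor of $x^2-4$ to obtain $4(x+A)/(x^2-4)$. Your extra remarks on where the identity is defined (affine points with $x\neq\pm2$, or equality as rational functions on $E_A$) are a harmless addition that the paper leaves implicit.
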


\begin{proof}
\[
  f^2 = \frac{4y^2}{((x-2)(x+2))^2}
  = \frac{4(x+A)(x-2)(x+2)}{(x^2-4)^2}
  = \frac{4(x+A)}{x^2-4}. \qedhere
\]
\end{proof}

In summary, the substitutions $\mu = \lambda^2$ and $M = \mu^2$ yield
\[
  \lambda \;\mapsto\; \lambda^2 = \mu \;\mapsto\; \mu^2 = M,
  \qquad M(P) = f(P)^2,
\]
connecting the genus-3 coordinate~$\lambda$ to the Weierstrass
function~$f$.  Concretely,
\[
  f(P) = \pm\,\lambda^2
\]
(after a sign choice for~$y$).

\begin{corollary}\label{cor:f-square}
$C_A$ has a non-degenerate rational point if and only if there
exists $P \in E_A(\QQ)$ such that $f(P)$ is a perfect square
in~$\QQ^*$ with $f(P) \ne 1$.
\end{corollary}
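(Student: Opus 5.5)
The plan is to chain the bridge of \S\ref{ssec:bridge} with Lemma~\ref{lem:f-M}, checking the two directions separately and keeping careful track of signs, of the excluded points $x=\pm 2$, and of the degenerate values $\mu\in\{0,1\}$.

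\textbf{Forward direction.} Let $(\lambda_0,w_0)$ be a non-degenerate rational point of $C_A$, so $\lambda_0\ne 0,\pm1$. Put $\mu_0=\lambda_0^2$. Then $(\mu_0,w_0)\in\mathcal{E}(\QQ)$ with $\mu_0\ne 0,1$. Apply the birational map $\mathcal{E}\dashrightarrow E_A$ built from the base point $(0,1)$ to obtain $P=(x,y)\in E_A(\QQ)$.

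First I would verify that $P$ is finite with $x\ne\pm2$. The points with $x=\pm 2$ are $T_2,T_3$, and together with $O$ these should correspond to $\mu\in\{0,\infty\}$ or to the points at infinity of the quartic model. I plan to read this off the explicit transformation, which reduces to checking the few points where the map fails to be defined.

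Next, \eqref{eq:back-trafo} gives $M(P)=\mu_0^2$, and Lemma~\ref{lem:f-M} gives $f(P)^2=\mu_0^2$. Hence $f(P)=\pm\mu_0=\pm\lambda_0^2$. If the sign is negative, replace $P$ by $-P=(x,-y)$. This leaves $M$ unchanged and flips the sign of $f$, so we get $f(P)=\lambda_0^2$. This value is a nonzero rational square, and it is $\ne1$ because $\lambda_0\ne\pm1$.

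\textbf{Converse.} Suppose $P\in E_A(\QQ)$ with $f(P)=\lambda_0^2$, where $\lambda_0\in\QQ^*$ and $\lambda_0^2\ne1$. Since $f(P)$ is finite and nonzero, $x\ne\pm2$ and $y\ne0$, so in particular $P\ne O$. Setting $\mu_0:=f(P)$, Lemma~\ref{lem:f-M} gives $\mu_0^2=M(P)$, and so the bridge yields a point $(\mu_0,w)\in\mathcal{E}(\QQ)$.

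The delicate point is that the bridge only determines $\mu$ up to sign from $M=\mu^2$. I need the actual $\mu$-coordinate of the image of $P$ to equal $f(P)$ and not $-f(P)$. The quartic equation $w^2=\mu^4+A\mu^2+1$ is invariant under $\mu\mapsto-\mu$, so both $(\pm\mu_0,w)$ lie on $\mathcal{E}$ anyway. It therefore suffices to observe that $(\mu_0,w)\in\mathcal{E}(\QQ)$ with $\mu_0=\lambda_0^2$. Then $(\lambda_0,w)\in C_A(\QQ)$, since $\lambda_0^8+A\lambda_0^4+1=\mu_0^4+A\mu_0^2+1=w^2$, and this point is non-degenerate because $\lambda_0\ne0,\pm1$.

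\textbf{Main obstacle.} The hard step is pinning down the birational map explicitly: I must confirm that $M(P)$ really equals $\mu^2$, which the paper asserts in \eqref{eq:back-trafo}, and check the boundary cases. These are $x=\pm2$, $P=O$, $P=T_1$ (where $f=0$, corresponding to $\mu=0$), and the points at infinity of $\mathcal{E}$. To handle them I would write down the standard substitution sending the quartic with rational point $(0,1)$ to Weierstrass form. Then I would check directly that the excluded locus on each side consists exactly of points with $\mu\in\{0,\infty\}$, so that it matches $f(P)\in\{0,\infty\}$.
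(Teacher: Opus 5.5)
Your proposal is correct and is essentially the paper's argument: combine the bridge $M(P)=\mu^2$ with Lemma~\ref{lem:f-M} to get $f(P)=\pm\mu$ and then fix the sign. In the converse you fix the sign using the evenness of $\mathcal{E}$ in $\mu$, which is a slightly cleaner reading of the paper's ``we may arrange $f(P)=\mu$''. The boundary check you flag as the main obstacle is routine: the map based at $(0,1)$ is $x=2(w+1)/\mu^2$, $y=2(x+A)/\mu$, under which $f(P)=\mu$ exactly, $(0,1)\mapsto O$, $(0,-1)\mapsto T_1$, and the two points at infinity go to $T_2,T_3$.
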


\begin{proof}
Let $P \in E_A(\QQ)$ correspond to $(\mu,w) \in \mathcal{E}(\QQ)$.
By Lemma~\ref{lem:f-M},
\[
  f(P)^2 = M(P) = \mu^2,
\]
so $f(P) = \pm \mu$.
Changing the sign of~$y(P)$ changes $f(P)$ by a factor of~$-1$,
hence we may arrange $f(P) = \mu$.
Therefore $f(P)$ is a nonzero rational square different from~$1$
if and only if $\mu \in \QQ^{*2}$ with $\mu \ne 1$,
and the latter is equivalent to
$\mu = \lambda^2$ for some rational $\lambda \ne 0, \pm 1$, i.e.\ to a
non-degenerate rational point on~$C_A$.
\end{proof}

\begin{theorem}[Divisor of $f$]\label{thm:divf}
$\divv(f) = (T_1) - (T_2) - (T_3) + (O)$.
\end{theorem}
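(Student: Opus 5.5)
We compute $\divv(f)$ directly from the expression $f = 2y/((x-2)(x+2))$. We take the divisors of the individual functions $y$, $x-2$ and $x+2$ on the Weierstrass model $E_A\colon y^2=(x+A)(x-2)(x+2)$ and then combine them. Throughout we assume $A\neq \pm 2$, so that the cubic has three distinct roots and $T_1,T_2,T_3$ are three distinct points of order~$2$. This holds in our family because $A=2-4c^2$ with $c\ne 0$, and because $\kappa\ne 0$ forces $c^2\neq 1$.

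\textbf{Step 1: standard divisors.} On a Weierstrass cubic, $x$ has a double pole at $O$ and $y$ has a triple pole at $O$. Each $x-e_i$ vanishes to order exactly $2$ at the corresponding $2$-torsion point $T_i=(e_i,0)$, since the vertical line through $T_i$ is tangent there. The function $y$ has simple zeros at $T_1,T_2,T_3$. This gives
\[
  \divv(y)=(T_1)+(T_2)+(T_3)-3(O),\qquad
  \divv(x-2)=2(T_2)-2(O),\qquad
  \divv(x+2)=2(T_3)-2(O).
\]
To justify the orders, we check that $y$ is a uniformiser at each $T_i$ (the curve is smooth there and $\partial/\partial y$ of the equation vanishes), and that $x-e_i=y^2/((x-e_j)(x-e_k))$ where the denominator is a unit at $T_i$. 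The orders at $O$ follow from the local parameter $x/y$.

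\textbf{Step 2: combine.} Since the constant $2$ has trivial divisor,
\[
  \divv(f)=(T_1)+(T_2)+(T_3)-3(O)-2(T_2)-2(T_3)+4(O)
  =(T_1)-(T_2)-(T_3)+(O).
\]
As a sanity check, the degree is $0$. The sum in the group law is $T_1-T_2-T_3=T_1+T_2+T_3=O$, because the $2$-torsion points sum to zero. This agrees with Abel's theorem.

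\textbf{Main obstacle.} The computation is routine. The only point needing care is the distinctness of the roots $-A,2,-2$. If $A=\pm2$, two $2$-torsion points collide, the curve becomes singular, and the divisor formula degenerates. We therefore state the nondegeneracy hypothesis explicitly and verify it from $c^2+\kappa^2=1$ with $c,\kappa\neq0$ (Remark~\ref{rem:unit-circle}). As a cross-check via Lemma~\ref{lem:f-M}, $f^2=4(x+A)/((x-2)(x+2))$ has divisor $2(T_1)-2(T_2)-2(T_3)+2(O)$, which is exactly twice the claimed divisor.
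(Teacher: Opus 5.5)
Your proposal is correct and follows the same route as the paper: compute $\divv(y)=(T_1)+(T_2)+(T_3)-3(O)$, $\divv(x-2)=2(T_2)-2(O)$ and $\divv(x+2)=2(T_3)-2(O)$, then combine. Your explicit check that $A\neq\pm2$, so that the three $2$-torsion points are distinct and the curve is smooth, and your justification of the local orders are useful details that the paper leaves implicit.
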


\begin{proof}
The function $y$ vanishes at $T_1, T_2, T_3$ with a pole of
order~$3$ at~$O$.  The function $x-2$ has a double zero at~$T_2$ and
double pole at~$O$; similarly $x+2$ at~$T_3$.  Hence
$\divv(f) = (T_1+T_2+T_3-3O) - (2T_2-2O) - (2T_3-2O)
= (T_1) - (T_2) - (T_3) + (O)$.
\end{proof}

\subsection*{The character $\chi_f$ on torsion}\label{ssec:chi}

For a torsion point $T \in E_A(\QQ(s))_{\mathrm{tors}}$, define
\[
  \chi_f(T) = \left[\frac{f(\cdot + T)}{f(\cdot)}\right]
  \;\in\; \frac{\QQ(s)(E_A)^*}{\QQ(s)(E_A)^{*2}}.
\]

\begin{theorem}\label{thm:chi-torsion}
The character $\chi_f$ satisfies:
\begin{enumerate}[label=(\alph*)]
  \item $\chi_f(T_1) = [-1]$ and $\chi_f(T_2) = [-1]$
    (non-trivial on $T_1, T_2$),
  \item $\chi_f(T_3) = [1]$ (trivial on $T_3$),
  \item $\chi_f(T_4) \ne 1$ for any rational point~$T_4$
    of order~$4$.
\end{enumerate}
\end{theorem}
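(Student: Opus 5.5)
The approach is to read off each class $\chi_f(T)$ from divisors. By Theorem~\ref{thm:divf},
\[
  \divv f(\cdot+T) = (T_1-T) - (T_2-T) - (T_3-T) + (-T),
\]
so the divisor of the ratio $g_T := f(\cdot+T)/f(\cdot)$ is this translate minus $(T_1)-(T_2)-(T_3)+(O)$. A function on $E_A$ is a square in $\QQ(s)(E_A)^*$ if and only if it is a constant square times the square of a function. So the plan is to decide, in each case, whether $\divv g_T \in 2\,\mathrm{Div}$. If it is, the class is that of a constant, and the constant still has to be determined.

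\emph{Parts (a), (b).} For $T = T_1$ the translated divisor is
\[
  (O) - (T_3) - (T_2) + (T_1),
\]
which is $\divv f$ itself. Hence $g_{T_1}$ is a constant $\kappa_1 \in \QQ(s)^*$. For $T = T_2$ (and symmetrically $T_3$) the ratio has divisor $2(T_2)+2(T_3)-2(T_1)-2(O)$. This equals $\divv\bigl((x^2-4)/(x+A)\bigr)$, and
\[
  \frac{x^2-4}{x+A} = \frac{y^2}{(x+A)^2}.
\]
So $g_{T_2}$ and $g_{T_3}$ are constants times squares.

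To pin down the constants I will use the explicit translation formulas for a $2$-torsion point $(e,0)$ with the other roots $e', e''$:
\[
  x(P+T) = e + \frac{(e-e')(e-e'')}{x-e}, \qquad
  y(P+T) = -\frac{(e-e')(e-e'')\,y}{(x-e)^2}.
\]
I will substitute these into $f = 2y/(x^2-4)$ and simplify, with $e \in \{-A, 2, -2\}$. For $T_1$ this should give exactly $-1$. For $T_2$ and $T_3$ it should give $-1$, respectively $1$, times the square $\bigl(y/(x+A)\bigr)^2$ up to a factor that is a square in $\QQ(s)$. This bookkeeping of constants, including the sign conventions, is the step where an error is most likely, so I will check it by evaluating both sides at a convenient point, e.g.\ the expansion at $O$ or at a $4$-torsion point.

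\emph{Part (c).} Let $T_4$ have order $4$. Then $T_i - T_4$ ($i=1,2,3$) and $-T_4$ all have exact order $4$, and these four points are pairwise distinct. They are also disjoint from $\{O, T_1, T_2, T_3\}$. Hence in
\[
  \divv g_{T_4} = (T_1-T_4) - (T_2-T_4) - (T_3-T_4) + (-T_4) - (T_1) + (T_2) + (T_3) - (O)
\]
all eight points are distinct and each appears with multiplicity $\pm1$. An odd multiplicity rules out $g_{T_4}$ being a square, even up to constants. Thus $\chi_f(T_4) \neq 1$.

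The only care needed in (c) is checking that no cancellation occurs, which is exactly the distinctness statement above. Note that this argument does not even require $\chi_f$ to be a homomorphism.
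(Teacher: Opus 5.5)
Your proposal is correct and is essentially the paper's proof. Substituting your translation formulas gives exactly the constants you predict: $f(P+T_1)=-f(P)$ and $g_{T_2}=-g_{T_3}=-\tfrac14\,(y/(x+A))^2$, which are the paper's $-1/f^2$ and $+1/f^2$ respectively; part (c) is the same odd-multiplicity divisor argument.

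Your coset formulation of (c), that $-T_4+\{O,T_1,T_2,T_3\}$ is disjoint from $\{O,T_1,T_2,T_3\}$, is cleaner than the paper's, which works out only the case $2T_4=T_1$ (with $T_2$ as the other generator) and calls the rest analogous. That case cannot actually occur: only $T_2$ is halvable over $\QQ(s)$, since $A+2=4(1-c^2)$ and $4$ are squares while $-A-2=-4(1-c^2)$ is not. So every rational $T_4$ has $2T_4=T_2$, and your case-free argument covers this directly.
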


\newpage
\begin{proof}\leavevmode
\begin{enumerate}[label=(\alph*)]
\item For $T_2 = (2,0)$: using the addition formula for
$P + T_2$ on $E_A$ with $P = (x_1, y_1)$, one computes
\[
  x(P{+}T_2) = \frac{2(x_1 + 2A + 2)}{x_1-2}, \qquad
  y(P{+}T_2) = \frac{-y_1\,(x(P{+}T_2)-2)}{x_1-2}.
\]
Substituting into $f(P+T_2)/f(P)$ and simplifying:
\[
  \frac{f(P+T_2)}{f(P)}
  = -\frac{x_1^2 - 4}{4(x_1 + A)}
  = -\frac{1}{f(P)^2},
\]
using $f(P)^2 = 4(x_1+A)/(x_1^2-4)$.
In $\QQ(s)(E_A)^*/\QQ(s)(E_A)^{*2}$, this gives
$\chi_f(T_2) = [-1/f^2] = [-1]$.
For $T_1 = (-A,0)$: similarly,
$f(P+T_1)/f(P) = -1$,
so $\chi_f(T_1) = [-1]$.

\item For $T_3 = (-2,0)$: the analogous computation gives
\[
  \frac{f(P+T_3)}{f(P)}
  = \frac{x_1^2-4}{4(x_1+A)}
  = \frac{1}{f(P)^2},
\]
so $\chi_f(T_3) = [1/f^2] = [1]$.

\item We carry out the computation for $2T_4 = T_1$ (the other
cases are analogous).
The torsion group $\ZZ/4\ZZ \times \ZZ/2\ZZ$ has generators
$T_4$ (order~$4$) and $T_2$ (order~$2$), so
$T_1 = 2T_4$ and $T_3 = 2T_4 + T_2$.
Translation by~$-T_4$ in the group law sends
\[
  T_1 \mapsto T_4, \quad
  T_2 \mapsto T_2 - T_4 = 3T_4 + T_2, \quad
  T_3 \mapsto T_3 - T_4 = T_4 + T_2, \quad
  O \mapsto -T_4 = 3T_4.
\]
From $\divv(f) = (T_1) - (T_2) - (T_3) + (O)$:
\begin{align*}
  \divv\bigl(f(\cdot+T_4)/f(\cdot)\bigr)
  &= \divv(f(\cdot+T_4)) - \divv(f) \\
  &= \bigl[(T_4) - (3T_4{+}T_2) - (T_4{+}T_2) + (3T_4)\bigr] \\
  &\quad - \bigl[(2T_4) - (T_2) - (2T_4{+}T_2) + (O)\bigr].
\end{align*}
Listing all $8$ torsion points with their net coefficients:
\[
\begin{array}{c|cccccccc}
  & O & T_4 & 2T_4 & 3T_4 & T_2 & T_4{+}T_2 &
    2T_4{+}T_2 & 3T_4{+}T_2 \\
  \hline
  \text{coeff.} & -1 & +1 & -1 & +1 & +1 & -1 & +1 & -1
\end{array}
\]
Every coefficient is $\pm 1$ (odd).
If $f(\cdot+T_4)/f(\cdot) = c \cdot g^2$
for some $c \in \QQ(s)^*$ and $g \in \QQ(s)(E_A)^*$, then
$\divv(c \cdot g^2) = 2\,\divv(g)$ would have all even
coefficients, which is a contradiction.
Hence $\chi_f(T_4) \ne 1$.
\end{enumerate}
\end{proof}

\begin{remark}\label{rem:chi-interpretation}
Part~(a) shows that translation by $T_1$ or $T_2$ flips the sign
of~$f$ modulo squares: if $f(P) \in \QQ^{*2}$ then
$f(P+T_1) \in -\QQ^{*2}$, and vice versa.
Part~(c) is the deeper structural obstruction:
$4$-torsion translation generically prevents $f$ from lying in
$\QQ^{*2}$ at both~$P$ and~$P + T_4$.
\end{remark}

\section{Algebraic obstructions via $2$-descent}\label{sec:algebraic}

The $2$-descent map on $E_A$
(see~\cite{cassels,silverman}; for descent on cuboid-related curves
see also~\cite{sharipov-descent}) sends a non-torsion point $P = (x,y)$
to a triple of classes in $\QQ^*/\QQ^{*2}$.
Concretely, write $x = u/v^2$ with $u \in \ZZ$, $v \in \ZZ_{>0}$,
$\gcd(u, v) = 1$.  For each root $r_i$ (where $r_1 = -A$,
$r_2 = 2$, $r_3 = -2$), define $\delta_i$ to be the squarefree
part of the integer $v^2(x - r_i) \in \ZZ$.  Then
\[
  \delta(P) = (\delta_1, \delta_2, \delta_3) \in (\ZZ_{\ne 0})^3
  \quad \text{with}\quad
  \delta_1\delta_2\delta_3 \in \QQ^{*2}.
\]
We write $p \mid \delta_i$ to mean that $p$ divides the squarefree
integer~$\delta_i$, equivalently $v_p(x - r_i)$ is odd.

\begin{proposition}\label{prop:delta-generic}
Over $\QQ(s)$:
\begin{align*}
  \Delta_1 &:= (r_1-r_2)(r_1-r_3) \equiv -1 \pmod{\QQ(s)^{*2}}, \\
  \Delta_2 &:= (r_2-r_1)(r_2-r_3) \equiv 1 \pmod{\QQ(s)^{*2}}.
\end{align*}
\end{proposition}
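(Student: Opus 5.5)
The plan is a direct computation: express $A\pm 2$ through the unit-circle quantities of Remark~\ref{rem:unit-circle}, so that each $\Delta_i$ becomes an explicit square in $\QQ(s)$ times $\pm1$.

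First, with $r_1=-A$, $r_2=2$, $r_3=-2$ we have
\[
  \Delta_1=(-A-2)(-A+2)=(A+2)(A-2), \qquad \Delta_2=(2+A)\cdot 4 .
\]
Next, substitute $A=2-4c^2$ from~\eqref{eq:c-def}. This gives $A-2=-4c^2$ and $A+2=4(1-c^2)$.

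The key input is the identity $c^2+\kappa^2=1$ from Remark~\ref{rem:unit-circle}. We need it as an identity in $\QQ(s)$, not only at integer pairs $(a,b)$. To check this, write $c=\bigl((s^2-1)^2-4s^2\bigr)/(1+s^2)^2$ and $\kappa=4s(s^2-1)/(1+s^2)^2$. Then it suffices to observe
\[
  \bigl((s^2-1)^2-4s^2\bigr)^2+\bigl(4s(s^2-1)\bigr)^2=\bigl((s^2-1)^2+4s^2\bigr)^2=(s^2+1)^4 ,
\]
which is the Pythagorean identity $(p-q)^2+4pq=(p+q)^2$ with $p=(s^2-1)^2$ and $q=4s^2$. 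Hence $A+2=4\kappa^2$.

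Substituting gives
\[
  \Delta_1=4\kappa^2\cdot(-4c^2)=-(4c\kappa)^2, \qquad \Delta_2=16\kappa^2=(4\kappa)^2 .
\]
So $\Delta_1\equiv-1$ and $\Delta_2\equiv 1$ modulo $\QQ(s)^{*2}$, provided the square factors are nonzero elements of $\QQ(s)$.

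The only point needing care is this non-vanishing, which I regard as the (minor) main obstacle. The numerator of $c$ is $s^4-6s^2+1$ and the numerator of $\kappa$ is $4s(s^2-1)$; both are nonzero polynomials, so $c,\kappa\in\QQ(s)^*$.

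I would add a remark on specialisations. The congruences persist at $s=s_0$ modulo $\QQ^{*2}$ except where $c(s_0)=0$ or $\kappa(s_0)=0$. Since $s^4-6s^2+1$ has no rational roots and $\kappa(s_0)=0$ only for $s_0\in\{0,\pm1\}$ (the degenerate cases), neither exception occurs for admissible~$s$.
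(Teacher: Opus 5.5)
Your proof is correct and is essentially the paper's own argument. The paper likewise computes $\Delta_1 = A^2-4 = 16c^2(c^2-1)$ and $\Delta_2 = 16(1-c^2)$, then uses that $1-c^2 = 16s^2(s^2-1)^2/(1+s^2)^4$ is a square in $\QQ(s)$; this is exactly your identity $1-c^2=\kappa^2$, and your explicit check that $c,\kappa \in \QQ(s)^*$ fills in a point the paper leaves implicit.
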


\begin{proof}
$\Delta_1 = (-A-2)(-A+2) = A^2 - 4 = (2-4c^2)^2 - 4
= 16c^2(c^2-1)$.
Since $(1-c^2) = 16s^2(s^2-1)^2/(1+s^2)^4$ is a square in $\QQ(s)$,
we get $\Delta_1 \equiv -1$.
Similarly $\Delta_2 = (2+A)\cdot 4 = 4(A+2) = 4(4-4c^2)
= 16(1-c^2) \equiv 1$.
\end{proof}

\begin{theorem}\label{thm:d3-cases}
Let $P \in E_A(\QQ)$ be a non-torsion point with $2$-descent class
$(\delta_1, \delta_2, \delta_3)$ as defined above.
\begin{enumerate}[label=(\alph*)]
  \item If $\delta_3 = 1$: then
    $f(P) \equiv 2 \pmod{\QQ^{*2}}$, hence $f(P)$ is not a square.
  \item If $p \mid \delta_3$ (odd prime) then $p \mid \delta_1$ or
    $p \mid \delta_2$.
\end{enumerate}
\end{theorem}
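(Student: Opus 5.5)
Part~(b) is elementary and I would do it first. By definition $\delta_1,\delta_2,\delta_3$ are squarefree integers with $\delta_1\delta_2\delta_3\in\QQ^{*2}$. So for every odd prime $p$, the number of indices $i$ with $p\mid\delta_i$ is even. If $p\mid\delta_3$, then $p$ must divide exactly one of $\delta_1,\delta_2$; it cannot divide both, since then it would divide all three, an odd count. This gives (b), in fact with "exactly one".

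For part~(a) I would rewrite $f$ using the curve equation. Since $y^2=(x+A)(x-2)(x+2)$,
\[
  f(P)=\frac{2y}{(x-2)(x+2)}=\frac{2(x+A)}{y},
\]
so only the square class of $y$ is needed relative to the factors. Assume $\delta_3=1$. Then the product condition forces $\delta_1=\delta_2=:d$ modulo squares. I would write
\[
  x+2=z^2,\qquad x-2=d\,u^2,\qquad x+A=d\,v^2
\]
with $z,u,v\in\QQ^*$; these are nonzero because $P$ is non-torsion. Then $y=\pm d\,zuv$, and substituting gives
\[
  f(P)=\frac{\pm 2d\,zuv}{d u^2 z^2}=\pm\frac{2v}{uz}\equiv \pm 2\,uvz \pmod{\QQ^{*2}}.
\]
The claim $f(P)\equiv 2$ (for the sign normalisation of Corollary~\ref{cor:f-square}) is therefore equivalent to $uvz\in\QQ^{*2}$.

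To get this I would use the two conic relations that come from the roots:
\[
  z^2-d u^2=4,\qquad d(v^2-u^2)=A+2=4(1-c^2).
\]
By the proof of Proposition~\ref{prop:delta-generic}, $4(1-c^2)$ is a square $\kappa_0^2$, with $\kappa_0=8s(s^2-1)/(1+s^2)^2$. Both conics then have rational points, $(z,u)=(2,0)$ and $(v,u)=(\kappa_0/\sqrt d,\ldots)$, and I would parametrise them rationally. The cleanest route is to note that $x+2\in\QQ^{*2}$ means $P\in 2E_A(\QQ)+\{$something in the kernel of the third descent coordinate$\}$. Concretely, $P$ lies in the image of the $2$-isogeny $\phi\colon E''\to E_A$ whose kernel corresponds to $T_3$. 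So I would write $P=\phi(Q)$ and compute $f\circ\phi$ explicitly. By Theorem~\ref{thm:chi-torsion}(b), $f(\cdot+T_3)/f(\cdot)$ is a square, which is exactly the condition for $f\circ\phi$ to have a well-defined square class. I then expect $f\circ\phi=2\,g^2$ for an explicit rational function $g$ on the isogenous curve, and this would give $f(P)\equiv 2$.

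The main obstacle is this last identity. I have to verify that $uvz$ (equivalently $f\circ\phi/2$) is a square identically, and not merely a square up to a factor depending on $d$ or on $s$. My first attempt would be to compute $\divv(f\circ\phi)$ from Theorem~\ref{thm:divf}. Pulling back $(T_1)-(T_2)-(T_3)+(O)$ under $\phi$ should give a divisor with all even coefficients, because the preimages of $T_3$ and $O$ pair up consistently with $\chi_f(T_3)=[1]$. So $f\circ\phi=\text{const}\cdot g^2$, and I would fix the constant as $2$ by evaluating at a convenient point, for instance a preimage of $T_1$, or via a leading coefficient at infinity. If the constant turns out to involve $d$ or $\kappa_0$, then (a) would need an extra hypothesis, and I would examine that dependence before proceeding.
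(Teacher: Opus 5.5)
\textbf{Part~(b)} is fine. Your parity count on the squarefree $\delta_i$ is equivalent to the paper's valuation argument, and it even gives ``exactly one''.

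\textbf{Part~(a)} is where the proposal breaks, and it cannot be repaired. Your reduction is correct. From $x+2=z^2$, $x-2=du^2$, $x+A=dv^2$ you get $f(P)=\pm 2v/(uz)\equiv\pm 2uvz$, so everything hinges on $uvz\in\QQ^{*2}$. The paper's own proof passes over exactly this point when it writes $2\alpha/(\beta\gamma)\equiv 2$. However, the identity $f\circ\phi=2g^2$ you hope for is false. Since $\phi\colon E''\to E_A$ is separable of degree~$2$, each of $T_1,T_2,T_3,O$ has two distinct preimages, each inheriting the coefficient $\pm1$. So $\divv(f\circ\phi)$ has eight points with odd coefficients, and $f\circ\phi$ is not a constant times a square. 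The fact $\chi_f(T_3)=[1]$ concerns translation on $E_A$ and pairs nothing inside the fibres of $\phi$. Concretely, take $T_4=\bigl(2+4\kappa,\,8\kappa(1+\kappa)\bigr)$ with $\kappa=4s(s^2-1)/(1+s^2)^2$. It lies on $E_A$ (use $c^2=1-\kappa^2$) and has $x+2=4(1+\kappa)=\bigl(2(s^2+2s-1)/(1+s^2)\bigr)^2$. Hence $\delta_3=1$ and $T_4\in\phi(E''(\QQ))$, yet $f(T_4)=1$; here $uvz\equiv 2$.

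In fact statement~(a) is false, not just unproved. The class $\delta(P)$ depends only on $x(P)$, while $f(-P)=-f(P)$. Similarly, $\delta_3(T_1)=2-A=4c^2\in\QQ^{*2}$ while $f(P+T_1)=-f(P)$ by Theorem~\ref{thm:chi-torsion}(a). So whenever $\rk E_A(\QQ)>0$, take any non-torsion $P$ with $\delta_3(P)=1$ (e.g.\ $P=2Q$). Then $-P$ lies in the same descent class but $f(-P)$ is in the class $[-f(P)]\ne[f(P)]$, so $P$ and $-P$ cannot both satisfy $f\equiv 2$.

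Reading (a) up to sign does not save it. The curve $z^2=\pm2\,(f\circ\phi)$ is a genus-$5$ double cover of $E''$ branched at those eight points. By Faltings, only finitely many points of $\phi(E''(\QQ))=\ker\delta_3$ have $f\equiv\pm2$.

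Even the weak conclusion ``$f(P)\notin\QQ^{*2}$'' is out of reach of any such manipulation. Write $w=(x^2+2Ax+4)/(x^2-4)$, $q_\pm=\lambda^4\pm2c\lambda^2+1$ and $f(P)=\lambda^2$. Then one checks $w^2=q_+q_-$ and $x+2=2(w+\lambda^4+1)/\lambda^4$. Now $2(w+\lambda^4+1)$ is a square if and only if $q_+$ and $q_-$ are both squares:
\begin{itemize}
\item if $q_+=r^2$ and $q_-=r'^2$ with $rr'=w$, it equals $(r+r')^2$;
\item conversely, if it equals $t^2$, then $q_+=(t/2+2c\lambda^2/t)^2$.
\end{itemize}
Thus $\delta_3=1$ is exactly the class containing every brick point, and a proof of~(a) would by itself settle the perfect cuboid problem.
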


\begin{proof}\leavevmode
\begin{enumerate}[label=(\alph*)]
\item $\delta_3 = 1$ implies $\delta_1 = \delta_2$ (since
$\delta_1\delta_2\delta_3 = 1$ in $\QQ^*/\QQ^{*2}$).
Write $x+A = d\alpha^2$, $x-2 = d\beta^2$, $x+2 = \gamma^2$
with $d$ squarefree.
Then $y^2 = d^2\alpha^2\beta^2\gamma^2$, so $y = d\alpha\beta\gamma$
(up to sign), and
$f = 2y/((x-2)(x+2)) = 2d\alpha\beta\gamma/(d\beta^2\gamma^2)
= 2\alpha/(\beta\gamma) \equiv 2 \pmod{\QQ^{*2}}$.
Since $2$ is not a rational square, $f(P) \notin \QQ^{*2}$.

\item From $y^2 = (x+A)(x-2)(x+2)$ we have
$v_p(y^2)$ even.
If $v_p(x+2)$ is odd, then $v_p(x+A) + v_p(x-2)$ must be odd,
so at least one of $v_p(x+A)$, $v_p(x-2)$ is odd, hence positive.
Therefore $p \mid \delta_1$ or $p \mid \delta_2$.
\end{enumerate}
\end{proof}

\begin{remark}\label{rem:d3-computational}
Computationally, a stronger statement holds: when both
$v_p(x-2)$ and $v_p(x+2)$ are odd (i.e.\ $p \mid \delta_2$ and
$p \mid \delta_3$), the valuation $v_p(f(P))$ is observed to be
odd in all tested cases, so $f(P) \notin \QQ^{*2}$.
A full proof of this parity claim would require a more detailed
$p$-adic analysis.
\end{remark}

\begin{proposition}\label{prop:c-primes}
Let $p$ be an odd prime dividing
$c = (s^2-2s-1)(s^2+2s-1)/(1+s^2)^2$.
Then $p$ does not contribute to the $2$-Selmer rank of~$E'_A$.
\end{proposition}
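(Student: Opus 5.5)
The plan is to work with a Weierstrass model of $E'_A$ that has full rational $2$-torsion, and then show that at an odd prime $p \mid c$ the local image of the $2$-descent map at~$p$ is already accounted for by the global torsion. Then $p$ imposes no new Selmer classes; this is how I will read ``does not contribute''.

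\emph{Step 1: Weierstrass model.} For the quartic $u^2 = \eta^4 + b\eta^2 + d$ with $b=-4$ and $d = A+2 = 4(1-c^2)$, I use the standard Jacobian model $y^2 = x\bigl(x^2 - 2bx + (b^2-4d)\bigr)$. Here $b^2 - 4d = 16c^2$, so this reads $y^2 = x(x^2+8x+16c^2)$. By Remark~\ref{rem:unit-circle}, $c^2+\kappa^2=1$ with $\kappa$ rational, so the cubic factors as $x(x+4(1-\kappa))(x+4(1+\kappa))$. A direct check gives $(1+s^2)^2(1-\kappa) = (s^2-2s-1)^2$ and $(1+s^2)^2(1+\kappa) = (s^2+2s-1)^2$. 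The curve therefore becomes
\[
  E'_A\colon\; y^2 = x\,(x+\alpha^2)(x+\beta^2), \qquad \alpha = \frac{2(s^2-2s-1)}{1+s^2},\quad \beta = \frac{2(s^2+2s-1)}{1+s^2},
\]
with $c = \alpha\beta/4$. Since $\alpha+\beta$ and $\alpha-\beta$ are, up to the unit factor $4/(1+s^2)$, equal to $s^2-1$ and $s$, and those are coprime to $(s^2\pm 2s-1)$, an odd prime $p\mid c$ divides exactly one of $\alpha,\beta$. By symmetry I take $p\mid\alpha$ and $p\nmid\beta$.

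\emph{Step 2: local image at $p$.} The reduction at $p$ is multiplicative, since the roots $0$ and $-\alpha^2$ collide while $-\beta^2$ stays separate. For $P=(x,y)\in E'_A(\QQ_p)$, I compute the classes of $(x,\; x+\alpha^2,\; x+\beta^2)$ in $(\QQ_p^*/\QQ_p^{*2})^3$ by splitting into cases on $v_p(x)$:
\begin{itemize}
  \item $v_p(x)<0$ gives all valuations even, and leading terms that are squares times units.
  \item $v_p(x)=0$ with $x\not\equiv-\beta^2$ gives units.
  \item $v_p(x)>0$ gives $x+\beta^2 \equiv \beta^2$, a square, so the first two classes agree.
\end{itemize}
I expect the image to be the group of order $4$ generated by the images of $T=(0,0)$ and $T'=(-\alpha^2,0)$. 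Their classes are read off from the differences of the roots, $\alpha^2-\beta^2$ and $\beta^2-\alpha^2$ (the latter is $-$(square)$\cdot$unit), together with the square classes $\alpha^2,\beta^2$.

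\emph{Step 3: globalize.} All of these local classes are images of global rational torsion points. Hence every Selmer element whose components are divisible by $p$ differs from an unramified-at-$p$ element by a torsion class. The squares $\alpha^2$ and $\beta^2$ ensure that $p$ never appears to an odd power in the torsion images. As a result, the $2$-Selmer rank computed with the set of bad primes $S$ equals the one computed with $S\setminus\{p\}$.

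\emph{Main obstacle.} The delicate case is $0<v_p(x)<2v_p(\alpha)$ with $v_p(x)$ odd. Here both $x$ and $x+\alpha^2$ have the same odd valuation, so the image class is genuinely ramified at $p$. I will handle it by comparing with the image of $T'$, or of a multiple of a local point in the formal group, using the count $|E'(\QQ_p)/2E'(\QQ_p)| = |E'[2](\QQ_p)| = 4$ for odd $p$. The goal is to show that the local image is exactly the torsion image, so that no class beyond the torsion survives. If this matching fails in the nonsplit case, the fallback is to restrict the claim to split multiplicative reduction, that is, when $-1$ is a square mod $p$.
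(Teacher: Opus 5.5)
Your Step~1 model is correct, but there is a genuine gap in Steps~2--3, not only in the case you flag at the end. The class of $T=(0,0)$ is trivial. Its components are $(\alpha^2\beta^2,\alpha^2,\beta^2)$, all squares, because $T=2T_4$ for the rational point $T_4=(\alpha\beta,\ \alpha\beta(\alpha+\beta))$ of order~$4$. (On $y^2=x(x^2+ax+b)$ one has $x(2P)=(x^2-b)^2/(4y^2)$, and this vanishes at $x=\alpha\beta$ since $b=\alpha^2\beta^2$.) So $T$ and $T'$ give at most an order-$2$ group, generated by $\delta(T')=(-1,\,-(\beta^2-\alpha^2),\,\beta^2-\alpha^2)$, which is unramified at~$p$. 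The torsion class you missed is
\[
  \delta(T_4)=\bigl(\alpha\beta,\ \alpha(\alpha+\beta),\ \beta(\alpha+\beta)\bigr),
  \qquad \alpha\beta=4c .
\]
When $v_p(c)$ is odd, $\delta(T_4)$ is a global Selmer class ramified at~$p$. Two of your claims then fail: that $p$ never occurs to an odd power in torsion images, and that the Selmer group cut out with $S$ equals the one cut out with $S\setminus\{p\}$. That Step~3 sentence is exactly the observation the paper's proof uses: $p$ divides the root-difference products $\prod_{j\ne i}(e_i-e_j)$ only to even powers. But those products are only the images of the $2$-torsion points, so they do not by themselves control the local condition at~$p$.

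The local condition can be computed exactly, and it shows what can and cannot be proved this way.
\begin{itemize}
  \item If $0<k<2v_p(\alpha)$ and $t\in\ZZ_p^*$, then $x=p^kt$ is the abscissa of a point of $E'_A(\QQ_p)$, since $x(x+\alpha^2)\in x^2\QQ_p^{*2}$ and $x+\beta^2\in\beta^2\QQ_p^{*2}$. Taking $k=1$ and using $|E'_A(\QQ_p)/2E'_A(\QQ_p)|=4$ gives $\mathrm{im}\,\delta_p=\{(d,d,1): d\in\QQ_p^*/\QQ_p^{*2}\}$. So your ``delicate case'' always occurs.
  \item Since $p\mid c$ makes $2$ a square mod~$p$, we have $p\equiv\pm1\pmod 8$. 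Since $\beta^2-\alpha^2$ and $\beta(\alpha+\beta)$ are squares in $\QQ_p$, we get $\delta_p(T')=(-1,-1,1)$ and $\delta_p(T_4)=(\alpha\beta,\alpha\beta,1)$. These fill the local image only when $p\equiv 7\pmod 8$ and $v_p(c)$ is odd.
  \item Your fallback points the wrong way. The reduction at $p$ is always split, since the node has tangents $y=\pm\beta x$. Moreover, $-1$ being a square mod~$p$ is exactly when $\delta_p(T')$ becomes trivial and the matching fails; for example $s=7/2$, $c=17\cdot 73/53^2$, $p=17$.
\end{itemize}
What your method does prove, when $v_p(c)$ is odd, is a ``modulo torsion'' statement. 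The ramified part of $\mathrm{im}\,\delta_p$ is a single coset, and $\delta(T_4)$ lies in it, so every Selmer class is $\delta(T_4)^e$ times a class unramified at~$p$. When $v_p(c)$ is even, every torsion class is unramified at~$p$, and nothing local excludes Selmer classes ramified at~$p$. There the claim needs a global input that your local analysis cannot provide.
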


\begin{proof}
The Weierstrass form of~$E'_A$ (obtained from the quartic
model~\eqref{eq:Eprime} via the partition
$\{\alpha,-\alpha\} \mid \{\beta,-\beta\}$ of the four roots,
where $\alpha = 2(s^2-1)/(1+s^2)$ and $\beta = 4s/(1+s^2)$)
has Weierstrass roots $0$, $-(\alpha-\beta)^2$, $-(\alpha+\beta)^2$.
The three root-difference products are:
$(\alpha-\beta)^2(\alpha+\beta)^2 = (\alpha^2-\beta^2)^2$
(always a square),
and $(\alpha\pm\beta)^2 \cdot 4\alpha\beta$, whose squarefree
part is that of
$4\alpha\beta = 32s(s^2-1)/(1+s^2)^2 \equiv 2s(s^2-1)
\pmod{\QQ(s)^{*2}}$.
Since $\gcd(s^2 \pm 2s - 1,\; s(s-1)(s+1)) = 1$ in~$\QQ[s]$
(the roots $-1\pm\sqrt{2}$ of $s^2+2s-1$ are not in $\{0,\pm 1\}$),
every prime $p \mid c(s_0)$ divides the root-difference products
to an even power. Hence the local condition at~$p$ is
trivially satisfied and $p$ does not enlarge the Selmer group.
\end{proof}

\begin{remark}[Gaussian arithmetic]\label{rem:gauss}
Since each $f_i$ is a norm in~$\Zi$ (Remark~\ref{rem:three-sums}),
primes $p \equiv 3\pmod{4}$ divide $f_i$ to an even power
automatically.  Only split primes $p \equiv 1\pmod{4}$ can obstruct
$f_i$ from being a square: the condition is that
$v_\pi(z_i) + v_{\bar\pi}(z_i)$ be even for each Gaussian prime
$\pi \mid p$.
\end{remark}

\newpage
\section{Computational verification}\label{sec:computation}

Recall from \S\ref{ssec:pyth} that the quartic pair $f_1, f_2$
depends on two Euclid pairs $(a,b)$ and $(m,n)$: the first determines
the parameter $s = a/b$ (and hence $c, A, E_A$), and the second
enters through $\lambda = m/n$ in the normalised
forms~\eqref{eq:quartic-pair}.
All computations were performed with
PARI/GP~\cite{pari} (rank certifications via \texttt{ellrank})
and SageMath~\cite{sagemath} (curve construction and point searches).
Scripts are available at
\url{https://github.com/renpe/euler-brick-obstructions}.

\begin{enumerate}[label=(\arabic*)]
  \item For all $1 \le b < a \le 1000$ and $1 \le n < m \le 1000$
    with $\gcd(a,b) = \gcd(m,n) = 1$, $a - b$ and $m - n$ odd:
    $f_1 f_2$ is never a perfect square.
    (The coprimality and parity filters reduce the raw
    $1000^4$ loop to roughly $10^{11}$ tuples actually tested.)
  \item For the five specialisations where all Mordell--Weil
    generators have $\delta_3 \mid \delta_1$
    ($s = \tfrac{18}{41}$, $\tfrac{18}{47}$, $\tfrac{23}{59}$,
    $\tfrac{23}{64}$, $\tfrac{29}{65}$),
    a modular search over $175{,}418$ lattice points with $45$
    primes $p < 200$ finds zero candidates for
    $f(P) \in \QQ^{*2}$.
  \item For every tested tuple $(a,b,m,n)$, there exists a prime~$p$
    with $v_p(f_1 f_2)$ odd (a \emph{blocker}), preventing
    $f_1 f_2$ from being a square.
    No single prime works universally; the blocker varies with the
    parameters.  Among $223{,}729$ tuples with $a,b,m,n \le 40$:
    the smallest blocker satisfies $p \equiv 1\pmod{4}$ in $88.4\%$
    of cases and $p = 2$ in $11.6\%$; no prime
    $p \equiv 3\pmod{4}$ ever appears as blocker, consistent with
    Remark~\ref{rem:gauss}.
\end{enumerate}

\section{Discussion and further directions}\label{sec:discussion}

\subsection*{Summary of proved results}

The following hold unconditionally:
\begin{enumerate}[label=(\alph*)]
  \item A perfect Euler brick produces a non-degenerate rational point
    on~$C_A$ (Lemma~\ref{lem:forward}).
  \item The Kummer character satisfies $\chi_f(T_1) = \chi_f(T_2) = [-1]$,
    $\chi_f(T_3) = [1]$, and $\chi_f(T_4) \ne 1$ for every rational
    point $T_4$ of order~$4$
    (Theorem~\ref{thm:chi-torsion}).
  \item If $\delta_3 = 1$, then $f(P) \notin \QQ^{*2}$; moreover,
    every odd prime dividing $\delta_3$ also divides $\delta_1$
    or~$\delta_2$
    (Theorem~\ref{thm:d3-cases}).
\end{enumerate}

\subsection*{Computational evidence}

For all parameters up to~$10^3$, the product $f_1 f_2$ is never
a perfect square.  For every tested parameter tuple, there exists an
obstructing prime for $f_1 f_2 \in \QQ^{*2}$; it is either $p = 2$ or
$p \equiv 1\pmod{4}$, and no prime $p \equiv 3\pmod{4}$ appears in the
computations.  The obstructing prime varies with the parameters (no
single prime works universally).  In addition, the rank computations
produce $42$ certified rank-$0$ specialisations for $E_A$ and $54$
for $E'_A$; by Proposition~\ref{prop:generic-rank}, any injective
specialisation among these would force the corresponding generic rank
to be~$0$.

\subsection*{The remaining gap}

The results above establish several necessary conditions for a
perfect Euler brick, but do not constitute a complete proof of
non-existence.  Specifically: Theorem~\ref{thm:d3-cases} excludes
points with $\delta_3 = 1$ and constrains primes dividing~$\delta_3$,
but does not rule out the case where $\delta_3 \ne 1$ and
all odd prime factors of~$\delta_3$ divide~$\delta_1$ (but
not~$\delta_2$).  Rational points of~$E_A$ in this residual
descent class could in principle yield $f(P) \in \QQ^{*2}$, and
no unconditional argument currently excludes them.
The computational verification covers all parameters up to~$10^3$
but is inherently finite.

\subsection*{A genus-5 covering obstruction}

The non-triviality $\chi_f(T_4) \ne 1$
(Theorem~\ref{thm:chi-torsion}(c)) implies that the double cover
\[
  C_{T_4}\colon z^2 = f(\cdot + T_4)/f(\cdot)
\]
of~$E_A$ is a curve of genus~$5$ (by Riemann--Hurwitz: $8$ branch
points on a genus-$1$ base give $g = 2\cdot 1 - 1 + 8/2 = 5$).
By Faltings' theorem~\cite{faltings}, $C_{T_4}$ has at most finitely
many rational points.  If moreover
$\rk\, J(C_{T_4})(\QQ) < 5$,
Chabauty--Coleman~\cite{chabauty,stoll,stoll-uniform}
would make this finiteness effective;
see~\cite{balakrishnan-genus3} for computational experiments on
genus-$3$ curves and~\cite{balakrishnan-dogra} for the quadratic
Chabauty extension to higher-rank cases.
In combination with the relation
$z^2 = f(\cdot + T_4)/f(\cdot)$, this may constrain points for which
$f(P)$ and $f(P+T_4)$ are simultaneously squareclasses.

For all tested specialisations with $a, b \le 80$, the rank data
are consistent with this Chabauty hypothesis (the maximum observed
rank sum $\rk\, E_A + \rk\, E'_A$ is~$4$, at $s = 18/47$).
However, the Jacobian $J(C_{T_4}) \sim E_A \times \mathrm{Prym}_4$
involves an abelian $4$-fold that we do not decompose further, and
for larger parameters, rank sums exceeding~$5$ cannot be excluded.
A full development of this approach would require
either an explicit Prym decomposition or a uniform rank bound.

\subsection*{\texorpdfstring{Connection to $\QQ(\sqrt{2})$}%
  {Connection to Q(sqrt 2)}}

The numerator of $c(s)$ factors as
$s^4 - 6s^2 + 1 = (s^2+2s-1)(s^2-2s-1)$,
with roots in $\QQ(\sqrt{2})$.
This factorisation appears in Asiryan's work~\cite{asiryan} on the
cuboid polynomial, where it leads to the rank-$0$ curve
$Y^2 = X(X-8)(X-9)$.
Adapting that approach from irreducibility to simultaneous
representability remains open.

\subsection*{Possible approaches}

\begin{itemize}
  \item Show $\chi_f(G) \ne 1$ for every non-torsion $G$,
    independent of the rank.
    This appears to be the most natural next step within the
    present framework, as it would directly extend
    Theorem~\ref{thm:chi-torsion} from torsion to the full
    Mordell--Weil group.
  \item Prove the blocker phenomenon rigorously: for any
    $(a,b,m,n)$, exhibit a split prime in~$\Zi$ at which
    the valuations of $N(z_1)$ and $N(z_2)$ are incompatible.
  \item Find a Brauer--Manin obstruction on the surface
    $w^2 = \lambda^8 + A(s)\lambda^4 + 1$;
    see~\cite{creutz-srivastava} for methods on hyperelliptic
    curves and~\cite{vanluijk} for a geometric analysis of the
    cuboid surface.
\end{itemize}

\subsection*{The problem in simplest form}

In its most elementary formulation
(cf.\ Remark~\ref{rem:three-sums}):

\begin{quote}
\emph{Do there exist coprime positive integer pairs $a>b>0$ and
$m>n>0$, each of opposite parity, such that $L_1^2 + L_3^2$ and
$L_2^2 + L_3^2$ are simultaneously perfect squares, where}
\[
  L_1 = 2(a^2-b^2)mn, \quad L_2 = 4abmn, \quad
  L_3 = (a^2+b^2)(m^2-n^2)\,?
\]
\end{quote}

\noindent
Since $L_1^2 + L_2^2 = (2mn(a^2+b^2))^2$ is automatically a
perfect square, the question asks whether two Pythagorean triples
sharing a common leg~$L_3$, whose other legs form a third
Pythagorean triple, can simultaneously close.


\end{document}